\newtheorem{thm}{Theorem}
\newtheorem{lem}{Lemma}
\newtheorem{rem}{Remark}
\begin{document}

\title{Biaxial monogenic functions from Funk-Hecke's formula combined with Fueter's theorem}

\author{Dixan Pe\~na Pe\~na\\\small{e-mail: dpp@cage.ugent.be} \and Frank Sommen\\\small{e-mail: fs@cage.ugent.be}}

\date{\normalsize{Clifford Research Group, Department of Mathematical Analysis\\Faculty of Engineering and Architecture\\Ghent University\\Galglaan 2, 9000 Gent, Belgium}}

\maketitle

\begin{abstract}
\noindent  Funk-Hecke's formula allows a passage from plane waves to radially invariant functions. It may be adapted to transform axial monogenics into biaxial monogenics that are monogenic functions invariant under the product group SO($p$)$\times$SO($q$). Fueter's theorem transforms holomorphic functions in the plane into axial monogenics, so that by combining both results, we obtain a method to construct biaxial monogenics from  holomorphic functions.\vspace{0.2cm}\\
\noindent\textit{Keywords}: Monogenic functions; Funk-Hecke's formula; Fueter's theorem.\vspace{0.1cm}\\
\textit{Mathematics Subject Classification}: 30G35, 33C45.
\end{abstract}

\section{Introduction}

Clifford analysis (see e.g. \cite{BDS,DSS,GuSp}) forms a unifying language for all kinds of higher dimensional generalizations of the Cauchy-Riemann system; in this way it also constitutes a natural framework within which the equations of mathematical physics may be elegantly formulated. The basic language is defined as follows.

The real Clifford algebra $\mathbb R_{0,m}$ (see \cite{Cl}) is the free algebra generated by the symbols $e_1,\dots,e_m$, subject to the multiplication relations 
\[e_je_k+e_ke_j=-2\delta_{jk},\quad j,k=1,\dots,m.\] 
The dimension of the real Clifford algebra $\mathbb R_{0,m}$ is $2^m$, as is the case for the Grassmann algebra generated by $e_1,\dots,e_m$, but the difference is that now $e_j^2=-1$ instead of $e_j^2=0$, creating a structure with similarities to the complex numbers. 

The elements $e_j$ are geometrically identified with the Euclidean basis of the vector space $\mathbb R^m$, and the Clifford (or geometric) product between two vectors $\underline X=\sum_{j=1}^mx_je_j$ and $\underline Y=\sum_{j=1}^my_je_j$ decomposes into the scalar valued dot product $\underline X \bullet\underline Y$, which coincides, up to a minus sign, with the standard Euclidean inner product, and the bivector valued wedge product $\underline X \wedge\underline Y$. A general element $a$ of $\mathbb R_{0,m}$ may be written as
\[a=\sum_Aa_Ae_A,\quad a_A\in\mathbb R,\]
in terms of the basis elements $e_A=e_{j_1}\dots e_{j_k}$, defined for every subset $A=\{j_1,\dots,j_k\}$ of $\{1,\dots,m\}$ with $j_1<\dots<j_k$. For the empty set, one puts $e_{\emptyset}=1$, the latter being the identity element. The subspace
\[\mathbb R_{0,m}^+=\left\{a\in\mathbb R_{0,m}:\;a=\sum_{\vert A\vert\;\textrm{even}}a_Ae_A\right\}\]
is called the even subalgebra.

The first order differential operator in $\mathbb R^m$ given by 
\[\partial_{\underline X}=\sum_{j=1}^me_j\partial_{x_j}\]
is called the Dirac operator, and its null solutions are called (left) monogenic functions, i.e. solutions of $\partial_{\underline X}f(\underline X)=0$ (see \cite{BDS,DSS}). In a similar way one also defines monogenicity with respect to the generalized Cauchy-Riemann operator $\partial_{x_0}+\partial_{\underline X}$ in $\mathbb R^{m+1}$. It is well-known that every monogenic function is also harmonic, which is a consequence of the following fact
\[\Delta=\partial_{x_0}^2+\Delta_{\underline X}=\sum_{j=0}^m\partial_{X_j}^2=(\partial_{x_0}+\partial_{\underline X})(\partial_{x_0}-\partial_{\underline X}).\]
One basic result in Clifford analysis is the so-called Fueter's theorem, which discloses a remarkable connection existing between holomorphic functions and monogenic functions (see \cite{F,Q,Sce,S3}). For other generalizations and works about Fueter's theorem we refer the reader e.g. to \cite{CSDPF,CoSaF,CoSaF2,KQS,DS2,QS}. 

\begin{thm}[Fueter's theorem]
Let $u+iv$ be a holomorphic function in the open subset $\Xi$ of the upper half-plane and assume that $P_k(\underline X)$ is a homogeneous monogenic polynomial of degree $k$ in $\mathbb R^m$. If $m$ is odd, then the function
\begin{equation*}
\Delta^{k+\frac{m-1}{2}}\left(\left(u(x_0,R)+\frac{\underline X}{R}\,v(x_0,R)\right)P_k(\underline X)\right),\quad R=\vert\underline X\vert
\end{equation*}
is monogenic in $\Omega=\{(x_0,\underline X)\in\mathbb R^{m+1}:\;(x_0,R)\in\Xi\}$.
\end{thm}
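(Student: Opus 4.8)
The plan is to reduce the statement to a scalar polyharmonicity fact. Write $\underline\omega=\underline X/R$, so that $\underline\omega^{2}=-1$, $\partial_{\underline X}R=\underline\omega$ and $\underline\omega\underline X=\underline X\underline\omega=-R$, and set
\[F=\Bigl(u(x_0,R)+\underline\omega\, v(x_0,R)\Bigr)P_k(\underline X),\]
which is smooth on $\Omega$ because $R>0$ there. Put $N=k+\tfrac{m-1}{2}$; since $m$ is odd this is an integer and $2N=2k+m-1$. As $\Delta=(\partial_{x_0}+\partial_{\underline X})(\partial_{x_0}-\partial_{\underline X})$ is a scalar operator, it commutes with $\partial_{x_0}+\partial_{\underline X}$, so $(\partial_{x_0}+\partial_{\underline X})\Delta^{N}F=\Delta^{N}\bigl((\partial_{x_0}+\partial_{\underline X})F\bigr)$, and it suffices to prove that $\Delta^{N}\bigl((\partial_{x_0}+\partial_{\underline X})F\bigr)=0$.

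First I would compute $(\partial_{x_0}+\partial_{\underline X})F$; this is the calculational but essentially routine part. Using $\partial_{\underline X}P_k=0$, the homogeneity $\sum_{j}x_j\partial_{x_j}P_k=kP_k$, the anticommutation $e_j\underline X+\underline X e_j=-2x_j$ and $\partial_{\underline X}(v/R)=\partial_R(v/R)\,\underline\omega$, one obtains
\[\partial_{\underline X}\bigl(u\,P_k\bigr)=(\partial_R u)\,\underline\omega\,P_k,\qquad \partial_{\underline X}\bigl(\underline\omega\, v\,P_k\bigr)=-\Bigl(\partial_R v+\tfrac{2k+m-1}{R}\,v\Bigr)P_k .\]
Adding $\partial_{x_0}F=(\partial_{x_0}u+\underline\omega\,\partial_{x_0}v)P_k$ and invoking the Cauchy--Riemann equations $\partial_{x_0}u=\partial_R v$, $\partial_{x_0}v=-\partial_R u$, the scalar part and the $\underline\omega$-part both cancel, leaving
\[(\partial_{x_0}+\partial_{\underline X})F=-(2k+m-1)\,\frac{v(x_0,R)}{R}\,P_k(\underline X).\]
If $2k+m-1=0$ (which forces $k=0$, $m=1$) we are done; otherwise it remains to show that $\Delta^{N}\bigl[\tfrac{v}{R}\,P_k\bigr]=0$.

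For this I would pass to a scalar problem. Since $P_k$ is harmonic and homogeneous of degree $k$, a short computation gives, for any scalar $g=g(x_0,R)$,
\[\Delta\bigl[g\,P_k\bigr]=(\mathcal L g)\,P_k,\qquad \mathcal L:=\partial_{x_0}^{2}+\partial_R^{2}+\frac{2k+m-1}{R}\,\partial_R,\]
so $\mathcal L$ is the Laplacian of $\mathbb R^{2k+m+1}=\mathbb R^{2N+2}$ acting on functions radial in the $2k+m$ variables other than $x_0$; iterating, $\Delta^{N}[g\,P_k]=(\mathcal L^{N}g)\,P_k$. Everything therefore comes down to the scalar identity $\mathcal L^{N}[v/R]=0$, in which $v$, and hence every $\partial_R$-derivative of $v$, solves the two-dimensional Laplace equation $\partial_{x_0}^{2}v+\partial_R^{2}v=0$. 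The crucial computation is that, for any such $h$ and any $j\ge 1$,
\[\mathcal L\bigl[h/R^{j}\bigr]=(2N-2j)\,\frac{\partial_R h}{R^{j+1}}+j\,(j+1-2N)\,\frac{h}{R^{j+2}},\]
so that $\mathcal L^{p}[v/R]$ is a linear combination of the functions $R^{-(1+2p-i)}\,\partial_R^{i}v$ with $0\le i\le p$, whose coefficients satisfy an explicit two-term recursion in $p$; an induction on $p$ then shows that all of them vanish as soon as $p=N$.

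The main obstacle is this last step: showing that $N$ applications of $\Delta$ really do annihilate $\tfrac{v}{R}\,P_k$. The computation of $(\partial_{x_0}+\partial_{\underline X})F$ is only a careful bookkeeping of non-commuting factors, whereas here lies the arithmetic content of Fueter's theorem, and here is exactly where the hypothesis that $m$ is odd enters — it makes $N=\tfrac12(2k+m-1)$ an integer, and the recursion above is arranged so that its coefficients vanish after precisely $N$ steps. A convenient way to test the bookkeeping is the case $f(z)=z^{\ell}$, for which $i\mapsto\underline\omega$ gives $u+\underline\omega v=(x_0+\underline X)^{\ell}$, and the assertion becomes a concrete polyharmonicity statement about $(x_0+\underline X)^{\ell}P_k(\underline X)$.
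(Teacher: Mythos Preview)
The paper does not prove Theorem~1: it is quoted as a classical result with references to Fueter, Sce, Qian and Sommen. The closest thing to a proof in the paper is the argument for Theorem~\ref{FueSomwave}, which is the same statement transplanted to the biaxial setting, and whose method specializes immediately to the axial case. Comparing your plan with that argument is therefore the relevant exercise.

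The paper's route (for Theorem~\ref{FueSomwave}) proceeds in the opposite order from yours. It first evaluates the power of the Laplacian explicitly via Lemma~\ref{indents-Delta}, obtaining
\[
\Delta^{N}F=(2k+m-1)!!\bigl(M+\underline\omega\,N'\bigr)P_k,\qquad M=(R^{-1}\partial_R)^{N}u,\quad N'=(\partial_R R^{-1})^{N}v,
\]
and only then checks monogenicity by verifying, with the operator identities (iii)--(iv) of Lemma~\ref{operatorsD} and the Cauchy--Riemann equations, that $M,N'$ satisfy the Vekua system. You instead commute $\Delta^{N}$ past $\partial_{x_0}+\partial_{\underline X}$, compute $(\partial_{x_0}+\partial_{\underline X})F=-(2k+m-1)\,\tfrac{v}{R}\,P_k$ (this part is correct), and reduce everything to the scalar polyharmonicity $\mathcal L^{N}[v/R]=0$. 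The paper's approach buys an explicit closed form for $\Delta^{N}F$, which is what one actually wants for the applications later in the paper; your approach is conceptually lighter but yields no formula.

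Your plan is sound, but the step you yourself flag as ``the main obstacle'' is genuinely left open: you assert that the two--term recursion for the coefficients of $\mathcal L^{\,p}[v/R]$ forces all of them to vanish at $p=N$, but you do not carry out the induction, and it is not a one--liner, since at step $p$ there are $p+1$ coefficients coupled together. (It does work; for instance the extreme coefficients factor as $c_{p,p}=2^{p}\prod_{j=1}^{p}(N-j)$ and $c_{p,0}=2^{p}(2p-1)!!\prod_{j=1}^{p}(j-N)$, and one can check the intermediate ones similarly.) If you want to close this gap cleanly, the identities in Lemma~\ref{operatorsD} are exactly the bookkeeping device that makes the induction tractable---they are what the paper uses, and they would let you finish your scalar argument without reinventing that machinery.
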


Fueter's theorem leads to the construction of special monogenic functions starting from the choice of the initial holomorphic function $u+iv$ (see \cite{HDPS,DS1}). The type of functions generated are of the form
\begin{equation*}\label{axialmonfun}
\left(A(x_0,R)+\frac{\underline X}{R}\,B(x_0,R)\right)P_k(\underline X),
\end{equation*}
where $A$ and $B$ are continuously differentiable $\mathbb R$-valued functions. These functions are called axial monogenic functions of degree $k$ (see \cite{DSS,LB,S1,S2}) and one can easily show that $A$ and $B$ must satisfy the following Vekua-type system 
\begin{equation*}
\begin{split}
\partial_{x_0}A-\partial_RB&=\frac{2k+m-1}{R}B\\
\partial_RA+\partial_{x_0}B&=0.
\end{split}
\end{equation*}
Notice that axial monogenics consist of a factor $A(x_0,R)+\frac{\underline X}{R}\,B(x_0,R)$ that is invariant under the group SO($m$) of rotations around the $x_0$-axis, multiplied with the factor $P_k(\underline X)$ that is \lq\lq spherical monogenics" of degree $k$ in $\mathbb R^m$. 

In Section \ref{sect2} of this paper we further show how axial monogenics may be embedded in the biaxial decomposition $\mathbb R^p\oplus\mathbb R^{q}$ of $\mathbb R^{m}$ and this by choosing the axis along a chosen unit vector $\underline t\in\mathbb R^p$, leading to axial monogenics depending on a parameter unit vector $\underline t$. By integrating the parameter vector $\underline t$ over the sphere $S^{p-1}$ and applying Funk-Hecke's formula one arrives at a transformation from $\underline t$-dependent axial monogenics into biaxial monogenics. This method of combining Funk-Hecke's formula with Fueter's theorem is elaborated in Section \ref{sect3}.    

\section{The operator $\langle\underline t,\partial_{\underline x}\rangle-\underline t\partial_{\underline y}$}\label{sect2}

Consider the biaxial splitting $\mathbb R^{m}=\mathbb R^p\oplus\mathbb R^{q}$, $m=p+q$. In this way, for any $\underline X\in\mathbb R^{m}$ we may write  
\[\underline X=\underline x+\underline y,\] 
where
\[\underline x=\sum_{j=1}^px_je_j\quad\text{and}\quad\underline y=\sum_{j=1}^qx_{p+j}e_{p+j}.\]
By the above, we can also split the Dirac operator $\partial_{\underline X}$ as 
\[\partial_{\underline X}=\partial_{\underline x}+\partial_{\underline y},\]
the operators $\partial_{\underline x}$ and $\partial_{\underline y}$ being given by 
\[\partial_{\underline x}=\sum_{j=1}^pe_j\partial_{x_j},\qquad\partial_{\underline y}=\sum_{j=1}^qe_{p+j}\partial_{x_{p+j}}.\]
Let $\underline t=\sum_{j=1}^pt_je_j$ be an arbitrary fixed unit vector in $\mathbb R^p$ and let us introduce the following differential operator 
\begin{equation}\label{opeFS}
\langle\underline t,\partial_{\underline x}\rangle-\underline t\partial_{\underline y},\qquad\langle\underline t,\partial_{\underline x}\rangle=\sum_{j=1}^pt_j\partial_{x_j}.
\end{equation}
Suppose  that $P_{\ell}(\underline y)$ is a given arbitrary homogeneous monogenic polynomial of degree $\ell$ in $\mathbb R^q$. In this section we shall look for special null solutions of the operator (\ref{opeFS}), which we assume to be of the form
\begin{equation}\label{axialwfunc}
\bigl(M(\theta,\rho)-\underline t\,\underline\nu\,N(\theta,\rho)\bigr)P_{\ell}(\underline y),\quad\underline\nu=\frac{\underline y}{\vert\underline y\vert},
\end{equation}
where $M$ and $N$ are continuously differentiable $\mathbb R$-valued functions depending on the two variables 
\[(\theta,\rho)=(\langle\underline x,\underline t\rangle,\vert\underline y\vert).\]

\begin{lem}\label{axialwavef}
Assume that $M$ and $N$ are continuously differentiable $\mathbb R$-valued functions of two variables defined on an open subset of the upper half-plane. The function  
\[\bigl(M(\theta,\rho)-\underline t\,\underline\nu\,N(\theta,\rho)\bigr)P_{\ell}(\underline y)\]
is a null solution of the operator $\langle\underline t,\partial_{\underline x}\rangle-\underline t\partial_{\underline y}$ if and only if 
\begin{equation}\label{VEopeFS}
\begin{split}
\partial_{\theta}M-\partial_{\rho}N&=\frac{2\ell+q-1}{\rho}N\\
\partial_{\rho}M+\partial_{\theta}N&=0.
\end{split}
\end{equation}
\end{lem}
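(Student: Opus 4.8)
The plan is to prove the equivalence by a single direct computation: apply the operator $\langle\underline t,\partial_{\underline x}\rangle-\underline t\partial_{\underline y}$ to the candidate function $\bigl(M(\theta,\rho)-\underline t\,\underline\nu\,N(\theta,\rho)\bigr)P_{\ell}(\underline y)$, reduce the outcome to two Clifford-algebraically independent pieces, and read off the system (\ref{VEopeFS}) by setting each piece to zero.

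The $\underline x$-part is immediate: since $\theta=\langle\underline x,\underline t\rangle$ and $\vert\underline t\vert=1$, the chain rule gives $\langle\underline t,\partial_{\underline x}\rangle f(\theta,\rho)=(\partial_\theta f)(\theta,\rho)$ for any $C^1$ function $f$, while $\langle\underline t,\partial_{\underline x}\rangle$ annihilates $\underline t$, $\underline\nu$ and $P_{\ell}(\underline y)$, which are independent of $\underline x$. Hence $\langle\underline t,\partial_{\underline x}\rangle\bigl(\bigl(M-\underline t\,\underline\nu\,N\bigr)P_{\ell}\bigr)=\bigl(\partial_\theta M-\underline t\,\underline\nu\,\partial_\theta N\bigr)P_{\ell}$.

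For the $\underline y$-part I would first assemble the relevant Clifford-analytic facts in $\mathbb R^q$: $\partial_{\underline y}\rho=\underline\nu$ (with $\rho=\vert\underline y\vert$); $\partial_{\underline y}P_{\ell}=0$ by monogenicity; the operator identity $\partial_{\underline y}\,\underline y+\underline y\,\partial_{\underline y}=-(q+2E_{\underline y})$, where $E_{\underline y}=\sum_{j=1}^q x_{p+j}\partial_{x_{p+j}}$ is the Euler operator, which together with $E_{\underline y}P_{\ell}=\ell P_{\ell}$ gives $\partial_{\underline y}(\underline y P_{\ell})=-(2\ell+q)P_{\ell}$; the elementary relation $\underline\nu\,\underline y=-\rho$; the observation that, $\underline t\in\mathbb R^p$ and $\underline\nu\in\mathbb R^q$ being orthogonal, $\underline t\,\underline\nu$ is a bivector with $(\underline t\,\underline\nu)^2=-1$; and, crucially, that $\partial_{\underline y}$ is built from $e_{p+1},\dots,e_{p+q}$ while $\underline t$ is a constant vector from the complementary block, so that $\partial_{\underline y}$ anticommutes with $\underline t$, i.e.\ $\partial_{\underline y}(\underline t\,G)=-\underline t\,\partial_{\underline y}G$. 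Using these, and writing $\underline\nu\,N=\tfrac{N}{\rho}\,\underline y$ to exploit the last identity, a short calculation yields $\partial_{\underline y}(M P_{\ell})=(\partial_\rho M)\,\underline\nu\,P_{\ell}$ and $\partial_{\underline y}(\underline\nu\,N P_{\ell})=-\bigl(\partial_\rho N+\tfrac{2\ell+q-1}{\rho}\,N\bigr)P_{\ell}$. Substituting back — and using $\underline t^2=-1$ to simplify the term $\underline t\,\partial_{\underline y}(\underline t\,\underline\nu\,N P_{\ell})$ — the whole expression collapses to
\[
\bigl(\langle\underline t,\partial_{\underline x}\rangle-\underline t\partial_{\underline y}\bigr)\bigl(\bigl(M-\underline t\,\underline\nu\,N\bigr)P_{\ell}\bigr)=\Bigl(\partial_\theta M-\partial_\rho N-\tfrac{2\ell+q-1}{\rho}\,N\Bigr)P_{\ell}-\bigl(\partial_\rho M+\partial_\theta N\bigr)\,\underline t\,\underline\nu\,P_{\ell}.
\]

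It remains to see that the vanishing of the right-hand side is equivalent to (\ref{VEopeFS}). One direction is trivial. For the converse: for each $(\theta,\rho)$ in the domain one can pick $\underline y$ with $\vert\underline y\vert=\rho$ and $P_{\ell}(\underline y)\neq0$ (possible whenever $P_{\ell}\not\equiv0$, the only case of interest, since a nonzero homogeneous polynomial cannot vanish on a whole sphere), and then the identity reduces to $(\alpha-\beta\,\underline t\,\underline\nu)P_{\ell}(\underline y)=0$ with $\alpha,\beta\in\mathbb R$ the two scalar coefficients at that $(\theta,\rho)$; since $(\alpha-\beta\,\underline t\,\underline\nu)(\alpha+\beta\,\underline t\,\underline\nu)=\alpha^2+\beta^2$, the left factor is invertible unless $\alpha=\beta=0$, which forces $\alpha=\beta=0$, i.e.\ exactly (\ref{VEopeFS}). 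The only genuine difficulty is the non-commutative bookkeeping in the $\partial_{\underline y}$ step — in particular the anticommutation $\partial_{\underline y}(\underline t\,G)=-\underline t\,\partial_{\underline y}G$ and the identity $\partial_{\underline y}(\underline y P_{\ell})=-(2\ell+q)P_{\ell}$, which is exactly what injects the dimension-and-degree factor $2\ell+q-1$ into the Vekua-type system.
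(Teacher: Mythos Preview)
Your proof is correct and follows essentially the same route as the paper's: both compute the action of $\langle\underline t,\partial_{\underline x}\rangle-\underline t\partial_{\underline y}$ term by term, using the chain rule for the $\theta$-dependence, the anticommutation of $\underline t$ with $\partial_{\underline y}$, and the Leibniz/Euler identity $\partial_{\underline y}(\underline y\,P_\ell)=-(2\ell+q)P_\ell$ to produce the $\frac{2\ell+q-1}{\rho}$ term, arriving at the same two-piece decomposition. The only notable addition on your side is the explicit invertibility argument $(\alpha-\beta\,\underline t\,\underline\nu)(\alpha+\beta\,\underline t\,\underline\nu)=\alpha^2+\beta^2$ to justify the converse implication, which the paper simply asserts with ``the lemma easily follows.''
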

\begin{proof}
It is easily seen that 
\begin{align*}
\langle\underline t,\partial_{\underline x}\rangle M&=\sum_{j=1}^pt_j\partial_{x_j}M=\sum_{j=1}^pt_j(\partial_{\theta}M)(\partial_{x_j}\theta)=\vert\underline t\vert^2\partial_{\theta}M=\partial_{\theta}M\\
\partial_{\underline y}M&=\sum_{j=1}^qe_{p+j}\partial_{x_{p+j}}M=\sum_{j=1}^qe_{p+j}(\partial_{\rho}M)(\partial_{x_{p+j}}\rho)=\underline\nu(\partial_{\rho}M).
\end{align*}
Therefore
\begin{align*}
\langle\underline t,\partial_{\underline x}\rangle\left(MP_{\ell}\right)&=(\partial_{\theta}M)P_{\ell}\\
\langle\underline t,\partial_{\underline x}\rangle\left(\underline t\,\underline\nu NP_{\ell}\right)&=\underline t\,\underline\nu(\partial_{\theta}N)P_{\ell}\\
\partial_{\underline y}\left(MP_{\ell}\right)&=\big(\partial_{\underline y}M\big)P_{\ell}+M\big(\partial_{\underline y}P_{\ell}\big)=\underline\nu(\partial_{\rho}M)P_{\ell}.
\end{align*}
Using the Leibniz rule 
\begin{equation*}\label{lr}
\partial_{\underline y}(\underline yf)=-qf-2\sum_{j=1}^qx_{p+j}(\partial_{x_{p+j}}f)-\underline y(\partial_{\underline y}f),
\end{equation*}
and Euler's theorem for homogeneous functions, we also obtain that
\begin{align*}
\partial_{\underline y}\left(\underline t\,\underline\nu NP_{\ell}\right)&=-\underline t\,\partial_{\underline y}\left(\underline\nu NP_{\ell}\right)\\
&=\underline t\left(\partial_{\rho}N+\frac{2\ell+q-1}{\rho}N\right)P_{\ell}.
\end{align*}
We thus get
\[(\langle\underline t,\partial_{\underline x}\rangle-\underline t\partial_{\underline y})\Bigl(\bigl(M(\theta,\rho)-\underline t\,\underline\nu\,N(\theta,\rho)\bigr)P_{\ell}\Bigr)\]
\[\left(\left(\partial_{\theta}M-\partial_{\rho}N-\frac{2\ell+q-1}{\rho}N\right)-\underline t\,\underline\nu(\partial_{\rho}M+\partial_{\theta}N)\right)P_{\ell},\]
and the lemma easily follows.
\end{proof}

\begin{rem}\label{obser1}
Let $\left\{\underline t,\underline s_1,\dots,\underline s_{p-1}\right\}$ be an orthonormal basis of $\,\mathbb R^p$. Then it is clear that 
\[\partial_{\underline x}=\underline t\langle\underline t,\partial_{\underline x}\rangle+\sum_{j=1}^{p-1}\underline s_j\langle\underline s_j,\partial_{\underline x}\rangle.\]
Due to the fact that 
\begin{equation}\label{iguathet}
\langle\underline s_j,\partial_{\underline x}\rangle f(\theta,\rho)=\langle\underline s_j,\underline t\rangle\left(\partial_{\theta}f\right)=0
\end{equation}
we may conclude that any null solution of the operator $\langle\underline t,\partial_{\underline x}\rangle-\underline t\partial_{\underline y}$ of the form (\ref{axialwfunc}) is also a null solution of $\partial_{\underline X}=\partial_{\underline x}+\partial_{\underline y}$, i.e. it is monogenic.
\end{rem}

The following lemmas will be essential to obtain the main result in the section. The properties of the operators we list in the first lemma may be proved by induction and can be found in \cite{D}.

\begin{lem}\label{operatorsD}
Suppose that $h(x_1,x_2)$ is an infinitely differentiable $\mathbb R$-valued function. Then for $j=1,2$ one has 
\begin{itemize}
\item[{\rm i.}] $\partial_{x_j}^2\left(x_j^{-1}\partial_{x_j}\right)^nh=\left(x_j^{-1}\partial_{x_j}\right)^n\left(\partial_{x_j}^2h\right)-2n\left(x_j^{-1}\partial_{x_j}\right)^{n+1}h$, 
\item[{\rm ii.}] $\partial_{x_j}^2\left(\partial_{x_j}\,x_j^{-1}\right)^nh=\left(\partial_{x_j}\,x_j^{-1}\right)^n\left(\partial_{x_j}^2h\right)-2n\left(\partial_{x_j}\,x_j^{-1}\right)^{n+1}h$,
\item[{\rm iii.}] $\left(\partial_{x_j}\,x_j^{-1}\right)^n\left(\partial_{x_j}h\right)=\partial_{x_j}\left(x_j^{-1}\partial_{x_j}\right)^nh$,  
\item[{\rm iv.}] $\left(x_j^{-1}\partial_{x_j}\right)^n\left(\partial_{x_j}h\right)-\partial_{x_j}\left(\partial_{x_j}\,x_j^{-1}\right)^nh=\displaystyle{\frac{2n}{x_j}}\,\left(\partial_{x_j}\,x_j^{-1}\right)^nh$.
\end{itemize}
\end{lem}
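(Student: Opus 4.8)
The plan is to fix the index $j$, abbreviate $x:=x_j$ and $\partial:=\partial_{x_j}$, and regard $x^{-1}$, $\partial$, $D_+:=x^{-1}\partial$ and $D_-:=\partial\,x^{-1}$ as operators acting on the smooth functions under consideration on the set where $x_j\neq0$, on which all compositions make sense; here the order of composition matters, $D_+$ differentiating first and then dividing by $x$, while $D_-$ divides first. The only ingredient is the elementary commutator $[\partial,x^{-k}]=-k\,x^{-k-1}$; from it one reads off the two ``shift'' relations $D_+^{n+1}=x^{-1}\partial D_+^n$ (which is just the definition) and $D_-^{n+1}=\partial(x^{-1}D_-^n)=x^{-1}\partial D_-^n-x^{-2}D_-^n$. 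The strategy is: prove iii.\ first (it is immediate), then prove iv.\ by induction on $n$, and finally deduce i.\ and ii.\ from iii.\ and iv.\ by short operator manipulations. All the cases $n=0$ are trivial, and iv.\ written out at $n=1$ is just the familiar Leibniz identity $\partial^2(x^{-1}g)=x^{-1}\partial^2 g-2\,x^{-1}\partial(x^{-1}g)$.

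For iii., since $D_-(\partial h)=\partial(x^{-1}\partial h)=\partial(D_+h)$, a one-line induction gives $D_-^n(\partial h)=\partial(D_+^n h)$. For iv., the induction step expands $D_+^{n+1}\partial=D_+\bigl(D_+^n\partial\bigr)=x^{-1}\partial\bigl(\partial D_-^n+\tfrac{2n}{x}D_-^n\bigr)$ by the induction hypothesis, and expands the target $\partial D_-^{n+1}+\tfrac{2(n+1)}{x}D_-^{n+1}$ by means of the shift relation for $D_-$ and the commutator; both sides produce the same uncontrolled term $x^{-1}\partial^2 D_-^n$, which therefore drops out of the comparison, and matching the coefficients of $x^{-2}\partial D_-^n$ and of $x^{-3}D_-^n$ reduces to $2n+2=2(n+1)$. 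Note that this induction uses only the commutator, not the $n=1$ case as a separate input.

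Finally, i.\ and ii.\ follow by combining iii.\ and iv. For i.,
\begin{align*}
\partial^2 D_+^n&=\partial\bigl(D_-^n\partial\bigr)=\bigl(\partial D_-^n\bigr)\partial=\bigl(D_+^n\partial-\tfrac{2n}{x}D_-^n\bigr)\partial\\
&=D_+^n\partial^2-\tfrac{2n}{x}D_-^n\partial=D_+^n\partial^2-\tfrac{2n}{x}\partial D_+^n=D_+^n\partial^2-2n\,D_+^{n+1},
\end{align*}
where iii.\ is used to rewrite the inner $\partial D_+^n$ as $D_-^n\partial$ in the first and fifth equalities, iv.\ in the third, and the shift relation for $D_+$ in the last. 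For ii., symmetrically,
\[\partial^2 D_-^n=\partial\bigl(D_+^n\partial-\tfrac{2n}{x}D_-^n\bigr)=\bigl(\partial D_+^n\bigr)\partial-2n\,\partial(x^{-1}D_-^n)=D_-^n\partial^2-2n\,D_-^{n+1},\]
using iv.\ at the first equality and, at the last, iii.\ together with $D_-^{n+1}=\partial(x^{-1}D_-^n)$. I expect no conceptual difficulty at any point; the real obstacle is simply the bookkeeping — keeping straight the order of composition inside $D_\pm$, the way negative powers of $x$ move past $\partial$, and the index shifts $n\to n+1$ — so that the ``hard'' part here is being careful rather than finding an idea.
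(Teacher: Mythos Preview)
Your argument is correct. The paper itself does not prove this lemma: it only remarks that the identities ``may be proved by induction'' and refers to \cite{D} for details, so your write-up is in fact more complete than what appears here. Your organization---establishing iii.\ and iv.\ first by induction and then deducing i.\ and ii.\ from them via the operator identities $\partial D_+^n=D_-^n\partial$ and $\partial D_-^n=D_+^n\partial-\tfrac{2n}{x}D_-^n$---is a tidy variant of the straightforward ``induct on each item separately'' approach one would expect from the cited reference; it trades four independent inductions for two inductions plus two short algebraic derivations, which is arguably cleaner.
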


\begin{lem}\label{indents-Delta}
Let $h$ be a harmonic $\mathbb R$-valued function of two variables defined on an open subset of the upper half-plane. Then 
\begin{align*}
\left(\langle\underline t,\partial_{\underline x}\rangle^2+\Delta_{\underline y}\right)^n\big(h(\theta,\rho)P_{\ell}(\underline y)\big)&=\prod_{j=1}^n\big(2\ell+q-(2j-1)\big)\left(\left(\rho^{-1}\partial_\rho\right)^nh\right)P_{\ell},\\
\left(\langle\underline t,\partial_{\underline x}\rangle^2+\Delta_{\underline y}\right)^n\big(h(\theta,\rho)\,\underline\nu P_{\ell}(\underline y)\big)&=\prod_{j=1}^n\big(2\ell+q-(2j-1)\big)\left(\left(\partial_\rho\,\rho^{-1}\right)^nh\right)\underline\nu P_{\ell},
\end{align*}
with $n$ a positive integer.
\end{lem}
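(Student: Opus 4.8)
The plan is to reduce the $n$-fold iteration to repeated use of a single ``one-step'' formula for the operator $L:=\langle\underline t,\partial_{\underline x}\rangle^2+\Delta_{\underline y}$ applied to products $g(\theta,\rho)P_{\ell}(\underline y)$ and $g(\theta,\rho)\,\underline\nu P_{\ell}(\underline y)$, where $g$ is an arbitrary $C^\infty$ function of two variables (not assumed harmonic), and then to run an induction on $n$. First I would establish
\[L\bigl(g(\theta,\rho)P_{\ell}\bigr)=\Bigl(\partial_{\theta}^2g+\partial_{\rho}^2g+\frac{2\ell+q-1}{\rho}\,\partial_{\rho}g\Bigr)P_{\ell},\]
\[L\bigl(g(\theta,\rho)\,\underline\nu P_{\ell}\bigr)=\Bigl(\partial_{\theta}^2g+\partial_{\rho}^2g+(2\ell+q-1)\,\partial_{\rho}\bigl(\rho^{-1}g\bigr)\Bigr)\underline\nu P_{\ell}.\]
Both are obtained exactly as in Lemma~\ref{axialwavef}: since $\theta=\langle\underline x,\underline t\rangle$ and $|\underline t|=1$ one has $\langle\underline t,\partial_{\underline x}\rangle g(\theta,\rho)=\partial_{\theta}g$, so $\langle\underline t,\partial_{\underline x}\rangle^2$ acts as $\partial_{\theta}^2$; and $\partial_{\underline y}$ acting on a function of $\rho$ introduces the factor $\underline\nu\,\partial_{\rho}$. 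Then $\Delta_{\underline y}=-\partial_{\underline y}^2$, combined with the Leibniz rule for $\partial_{\underline y}(\underline y\,\cdot\,)$ and Euler's theorem applied to $P_{\ell}$ (precisely the manipulations already used in the proof of Lemma~\ref{axialwavef}), reduces $\Delta_{\underline y}(gP_{\ell})$ and $\Delta_{\underline y}(g\,\underline\nu P_{\ell})$ to the displayed radial expressions; the extra term $-(2\ell+q-1)\rho^{-2}g$ produced in the $\underline\nu P_{\ell}$ case (coming from $\partial_{\underline y}(\underline\nu P_{\ell})=-\frac{2\ell+q-1}{\rho}P_{\ell}$) combines with $(2\ell+q-1)\rho^{-1}\partial_{\rho}g$ into $(2\ell+q-1)\partial_{\rho}(\rho^{-1}g)$, which is exactly why the operator $\partial_{\rho}\,\rho^{-1}$ rather than $\rho^{-1}\partial_{\rho}$ governs the second identity.

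With these formulas in hand the lemma follows by induction on $n$. For $n=1$ one puts $g=h$; harmonicity of $h$ gives $\partial_{\theta}^2h+\partial_{\rho}^2h=0$, so the two one-step formulas collapse to $L(hP_{\ell})=(2\ell+q-1)\bigl(\rho^{-1}\partial_{\rho}h\bigr)P_{\ell}$ and $L(h\,\underline\nu P_{\ell})=(2\ell+q-1)\bigl((\partial_{\rho}\,\rho^{-1})h\bigr)\underline\nu P_{\ell}$, which is the claim. For the step $n\to n+1$ I would apply $L$ once more, using the one-step formulas with $g=(\rho^{-1}\partial_{\rho})^nh$ in the first identity and $g=(\partial_{\rho}\,\rho^{-1})^nh$ in the second. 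Since $\partial_{\theta}$ commutes with both $\rho^{-1}\partial_{\rho}$ and $\partial_{\rho}\,\rho^{-1}$ (they act on disjoint variables) and $\partial_{\theta}^2h=-\partial_{\rho}^2h$, in each case $\partial_{\theta}^2g$ cancels the term $(\cdot)^n(\partial_{\rho}^2h)$ arising from $\partial_{\rho}^2g$ through Lemma~\ref{operatorsD}(i), respectively~(ii), leaving only $-2n(\rho^{-1}\partial_{\rho})^{n+1}h$, respectively $-2n(\partial_{\rho}\,\rho^{-1})^{n+1}h$; adding the remaining radial term (equal to $(2\ell+q-1)(\rho^{-1}\partial_{\rho})^{n+1}h$ in the first case, and obtained via $\partial_{\rho}(\rho^{-1}g)=(\partial_{\rho}\,\rho^{-1})^{n+1}h$ in the second) produces the factor $2\ell+q-(2n+1)$. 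Multiplying by the inductive constant $\prod_{j=1}^n(2\ell+q-(2j-1))$ turns it into $\prod_{j=1}^{n+1}(2\ell+q-(2j-1))$, which closes the induction.

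The routine bulk of the argument is Lemma~\ref{operatorsD}, which is already available, together with the commutation of $\partial_{\theta}$ with the radial operators. The one step that genuinely needs care is deriving the two one-step formulas: keeping the signs straight in $\Delta_{\underline y}=-\partial_{\underline y}^2$, using $\underline\nu(\partial_{\rho}g)\,\underline\nu=-\partial_{\rho}g$ and the coefficient $2\ell+q-1$ stemming from $\partial_{\underline y}(\underline\nu P_{\ell})$, and then recognising that $\rho^{-1}\partial_{\rho}g-\rho^{-2}g=\partial_{\rho}(\rho^{-1}g)$, which is what forces the operator $\partial_{\rho}\,\rho^{-1}$ into the $\underline\nu P_{\ell}$ identity. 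Everything after that point is bookkeeping.
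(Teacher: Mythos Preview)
Your proposal is correct and follows essentially the same route as the paper: first establish the two one-step identities (the paper's equations \eqref{eqdelta1} and \eqref{eqdelta2}) for a general $g$, then induct on $n$ using Lemma~\ref{operatorsD}(i)--(ii) together with $\partial_\theta^2h=-\partial_\rho^2h$. The only cosmetic difference is that the paper computes $\Delta_{\underline y}(g\,\underline\nu P_\ell)$ via the product rule for the Laplacian (expanding $\Delta_{\underline y}\,\underline\nu$ and the cross terms), whereas you obtain it by applying $-\partial_{\underline y}^2$ directly and invoking $\partial_{\underline y}(\underline\nu P_\ell)=-\frac{2\ell+q-1}{\rho}P_\ell$; both computations yield the same formula and the induction thereafter is identical.
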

\begin{proof}
We first prove that for any twice continuously differentiable $\mathbb R$-valued function $g$ of two variables defined on an open subset of the upper half-plane the following equalities hold  
\begin{equation}\label{eqdelta1}
\left(\langle\underline t,\partial_{\underline x}\rangle^2+\Delta_{\underline y}\right)\big(g(\theta,\rho)P_{\ell}(\underline y)\big)=\left(\partial_{\theta}^2g+\partial_{\rho}^2g+(2\ell+q-1)\frac{\partial_{\rho}g}{\rho}\right)P_{\ell},
\end{equation}
\begin{equation}\label{eqdelta2}
\left(\langle\underline t,\partial_{\underline x}\rangle^2+\Delta_{\underline y}\right)\big(g(\theta,\rho)\underline\nu P_{\ell}(\underline y)\big)=\left(\partial_{\theta}^2g+\partial_{\rho}^2g+(2\ell+q-1)\partial_\rho\left(\frac{g}{\rho}\right)\right)\underline\nu P_{\ell}.
\end{equation}
In fact, it  is easily seen that
\begin{align*}
\langle\underline t,\partial_{\underline x}\rangle^2g&=\partial_{\theta}^2g,\\
\Delta_{\underline y}g&=-\partial_{\underline y}^2g=-\partial_{\underline y}(\underline\nu\partial_{\rho}g)=\partial_{\rho}^2g+\frac{q-1}{\rho}\,\partial_{\rho}g.
\end{align*}
Therefore 
\begin{align*}
\Delta_{\underline y}(gP_{\ell})&=(\Delta_{\underline y} g)P_{\ell}+2\sum_{j=1}^q(\partial_{x_{p+j}}g)(\partial_{x_{p+j}}P_{\ell})+g(\Delta_{\underline y}P_{\ell})\\
&=\left(\partial_{\rho}^2g+\frac{q-1}{\rho}\,\partial_{\rho}g\right)P_{\ell}+2\frac{\partial_{\rho}g}{\rho}\sum_{j=1}^qx_{p+j}\partial_{x_{p+j}}P_{\ell}\\
&=\left(\partial_{\rho}^2g+\frac{2\ell+q-1}{\rho}\,\partial_{\rho}g\right)P_{\ell},
\end{align*}
where we have used Euler's theorem for homogeneous functions. 

In the same spirit we also obtain 
\begin{align*}
\Delta_{\underline y}(g\underline\nu P_{\ell})&=(\Delta_{\underline y}\,\underline\nu)gP_k+2\sum_{j=1}^q(\partial_{x_{p+j}}\underline\nu)(\partial_{x_{p+j}}(gP_{\ell}))+\underline\nu\Delta_{\underline y}(gP_{\ell})\\
&=-\frac{(q-1)}{\rho^2}\,g\underline\nu P_{\ell}+2\sum_{j=1}^q\left(\frac{e_{p+j}}{\rho}-\frac{x_{p+j}}{\rho^2}\,\underline\nu\right)\left(\frac{x_{p+j}}{\rho}\,(\partial_{\rho}g)P_{\ell}+g(\partial_{x_{p+j}}P_{\ell})\right)\\
&\qquad+\left(\partial_{\rho}^2g+\frac{2\ell+q-1}{\rho}\,\partial_{\rho}g\right)\underline\nu P_{\ell}\\
&=\left(\partial_{\rho}^2g+(2\ell+q-1)\left(\frac{\partial_{\rho}g}{\rho}-\frac{g}{\rho^2}\right)\right)\underline\nu P_\ell.
\end{align*}
The proof now follows by induction using equalities (\ref{eqdelta1}) and (\ref{eqdelta2}) together with statements i and ii of Lemma \ref{operatorsD}. It is clear that the lemma is true in the case $n=1$. Assume that the formulae hold for a positive integer $n$; we will prove them for $n+1$. 

We thus get
\begin{align*}
&\frac{\left(\langle\underline t,\partial_{\underline x}\rangle^2+\Delta_{\underline y}\right)^{n+1}\big(hP_{\ell}\big)}{\prod_{j=1}^n\big(2\ell+q-(2j-1)\big)}=\left(\langle\underline t,\partial_{\underline x}\rangle^2+\Delta_{\underline y}\right)\left(\left(\left(\rho^{-1}\partial_{\rho}\right)^nh\right)P_{\ell}\right)\\
&\qquad=\left(\partial_{\theta}^2\left(\rho^{-1}\partial_{\rho}\right)^nh+\partial_{\rho}^2\left(\rho^{-1}\partial_{\rho}\right)^nh+(2\ell+q-1)\left(\rho^{-1}\partial_{\rho}\right)^{n+1}h\right)P_{\ell}\\
&\qquad=\left(\left(\rho^{-1}\partial_{\rho}\right)^n\left(\partial_{\theta}^2h+\partial_{\rho}^2h\right)+(2\ell+q-(2n+1))\left(\rho^{-1}\partial_{\rho}\right)^{n+1}h\right)P_{\ell}\\
&\qquad=(2\ell+q-(2n+1))\left(\left(\rho^{-1}\partial_{\rho}\right)^{n+1}h\right)P_\ell,
\end{align*}
which establishes the first formula. The other one may be proved in a similar way.
\end{proof}

These lemmas lead to Fueter's theorem for embedded axial monogenics depending on a variable axis along $\underline t\in S^{p-1}$.

\begin{thm}\label{FueSomwave}
Suppose that  the function $u+iv$ is holomorphic in the open subset $\Xi$ of the upper half-plane. If $q$ is odd, then the function
\begin{equation*}
\left(\langle\underline t,\partial_{\underline x}\rangle^2+\Delta_{\underline y}\right)^{\ell+\frac{q-1}{2}}\Big(\big(u(\theta,\rho)-\underline t\,\underline\nu\,v(\theta,\rho)\big)P_\ell(\underline y)\Big),
\end{equation*}
is a null solution of the operator $\langle\underline t,\partial_{\underline x}\rangle-\underline t\partial_{\underline y}$ in $\Omega=\{(\underline x,\underline y)\in\mathbb R^{m}:\;(\theta,\rho)\in\Xi\}$.
\end{thm}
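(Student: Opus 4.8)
The plan is to reduce the statement to Lemma~\ref{axialwavef}. Write $k=\ell+\frac{q-1}{2}$, which is a nonnegative integer precisely because $q$ is odd; this is the only place the parity hypothesis enters. The strategy has three steps: (1) compute $\bigl(\langle\underline t,\partial_{\underline x}\rangle^2+\Delta_{\underline y}\bigr)^{k}$ applied to the initial function $\bigl(u(\theta,\rho)-\underline t\,\underline\nu\,v(\theta,\rho)\bigr)P_\ell(\underline y)$ and show the result is again of the axial form (\ref{axialwfunc}); (2) verify that the two $\mathbb R$-valued coefficients so obtained satisfy the Vekua-type system (\ref{VEopeFS}); (3) invoke Lemma~\ref{axialwavef}. (When $k=0$, i.e. $\ell=0$ and $q=1$, there is nothing to differentiate and step~(2) below reduces directly to the Cauchy--Riemann equations, so one may assume $k\ge 1$ in step~(1).)

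For step~(1), split $\bigl(u-\underline t\,\underline\nu\,v\bigr)P_\ell=uP_\ell-\underline t\,\bigl(v\,\underline\nu\,P_\ell\bigr)$ and use that $\underline t$ is a constant Clifford number, hence commutes with the scalar-coefficient operator $\langle\underline t,\partial_{\underline x}\rangle^2+\Delta_{\underline y}$. Since $u+iv$ is holomorphic, $u$ and $v$ are harmonic, so both identities of Lemma~\ref{indents-Delta} apply (with $h=u$ and $h=v$ respectively) and give
\[
\bigl(\langle\underline t,\partial_{\underline x}\rangle^2+\Delta_{\underline y}\bigr)^{k}\Bigl(\bigl(u-\underline t\,\underline\nu\,v\bigr)P_\ell\Bigr)=C\,\bigl(M-\underline t\,\underline\nu\,N\bigr)P_\ell,
\]
with $M=\bigl(\rho^{-1}\partial_\rho\bigr)^{k}u$, $N=\bigl(\partial_\rho\,\rho^{-1}\bigr)^{k}v$ and $C=\prod_{j=1}^{k}\bigl(2\ell+q-(2j-1)\bigr)$, which is the product of the even integers $2,4,\dots,2\ell+q-1$ (so $C\neq 0$, though this is not even needed). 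Only scalar factors are moved in these rearrangements, so the anticommutativity of $\underline t$ and $\underline\nu$ plays no role.

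Step~(2) is the core computation and uses the Cauchy--Riemann equations $\partial_\theta u=\partial_\rho v$, $\partial_\theta v=-\partial_\rho u$ together with statements~iii and~iv of Lemma~\ref{operatorsD} in the variable $\rho$ (with $\partial_\theta$ commuting with all $\rho$-operators). For the second equation of (\ref{VEopeFS}): $\partial_\theta N=\bigl(\partial_\rho\,\rho^{-1}\bigr)^{k}(\partial_\theta v)=-\bigl(\partial_\rho\,\rho^{-1}\bigr)^{k}(\partial_\rho u)=-\partial_\rho\bigl(\rho^{-1}\partial_\rho\bigr)^{k}u=-\partial_\rho M$ by statement~iii, so $\partial_\rho M+\partial_\theta N=0$. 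For the first equation: $\partial_\theta M=\bigl(\rho^{-1}\partial_\rho\bigr)^{k}(\partial_\theta u)=\bigl(\rho^{-1}\partial_\rho\bigr)^{k}(\partial_\rho v)$, and statement~iv with $h=v$, $n=k$ gives $\bigl(\rho^{-1}\partial_\rho\bigr)^{k}(\partial_\rho v)-\partial_\rho\bigl(\partial_\rho\,\rho^{-1}\bigr)^{k}v=\frac{2k}{\rho}\bigl(\partial_\rho\,\rho^{-1}\bigr)^{k}v$, i.e. $\partial_\theta M-\partial_\rho N=\frac{2k}{\rho}N$. Since $2k=2\ell+q-1$, this is exactly the first line of (\ref{VEopeFS}). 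Hence, by Lemma~\ref{axialwavef}, $\bigl(M-\underline t\,\underline\nu\,N\bigr)P_\ell$ — and therefore its real scalar multiple displayed above — is a null solution of $\langle\underline t,\partial_{\underline x}\rangle-\underline t\partial_{\underline y}$ on $\Omega$, where $\rho=\vert\underline y\vert>0$ so that $\underline\nu$ and the powers of $\rho^{-1}$ are harmless.

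The only real obstacle is bookkeeping: one must pair the operator family $\bigl(\rho^{-1}\partial_\rho\bigr)^{k}$ (coming from the scalar part of Lemma~\ref{indents-Delta}) and the family $\bigl(\partial_\rho\,\rho^{-1}\bigr)^{k}$ (coming from the $\underline\nu$-part) with the correct halves of the intertwining identities iii and iv, and check that the constant $2k$ produced by the product in Lemma~\ref{indents-Delta} matches the coefficient $2\ell+q-1$ in the Vekua system; the rest is a direct substitution of the Cauchy--Riemann equations. It is worth recording the conceptual reason behind raising a second-order operator to the power $k$, namely the factorization $\bigl(\langle\underline t,\partial_{\underline x}\rangle-\underline t\partial_{\underline y}\bigr)\bigl(\langle\underline t,\partial_{\underline x}\rangle+\underline t\partial_{\underline y}\bigr)=\langle\underline t,\partial_{\underline x}\rangle^2+\Delta_{\underline y}$, even though the argument above does not explicitly use it.
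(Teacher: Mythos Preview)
Your proof is correct and follows essentially the same route as the paper's: apply Lemma~\ref{indents-Delta} to reduce to the axial form with $M=(\rho^{-1}\partial_\rho)^{\ell+\frac{q-1}{2}}u$, $N=(\partial_\rho\,\rho^{-1})^{\ell+\frac{q-1}{2}}v$, then verify the Vekua system (\ref{VEopeFS}) via statements iii and iv of Lemma~\ref{operatorsD} and the Cauchy--Riemann equations, and conclude with Lemma~\ref{axialwavef}. The additional remarks on the parity hypothesis, the constant $C=(2\ell+q-1)!!$, and the underlying factorization of the second-order operator are helpful but not present in the paper's own argument.
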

\begin{proof}
By Lemma \ref{indents-Delta}, we get that 
\begin{multline*}
\left(\langle\underline t,\partial_{\underline x}\rangle^2+\Delta_{\underline y}\right)^{\ell+\frac{q-1}{2}}\Big(\big(u(\theta,\rho)-\underline t\,\underline\nu\,v(\theta,\rho)\big)P_{\ell}(\underline y)\Big)\\
=(2\ell+q-1)!!\bigl(M(\theta,\rho)-\underline t\,\underline\nu N(\theta,\rho)\bigr)P_{\ell}(\underline y),
\end{multline*}
with 
\[M=\left(\rho^{-1}\partial_{\rho}\right)^{\ell+\frac{q-1}{2}}u,\]
\[N=\left(\partial_\rho\,\rho^{-1}\right)^{\ell+\frac{q-1}{2}}v.\]
On account of Lemma \ref{axialwavef} it is sufficient to show that $M$ and $N$ fulfill the Vekua-type system (\ref{VEopeFS}). Using statements iii and iv of Lemma \ref{operatorsD} and the fact that $u+iv$ is holomorphic we obtain
\[\begin{split}
\partial_{\theta}M-\partial_{\rho}N&=\left(\rho^{-1}\partial_{\rho}\right)^{\ell+\frac{q-1}{2}}\left(\partial_{\theta}u\right)-\partial_{\rho}\left(\partial_\rho\,\rho^{-1}\right)^{\ell+\frac{q-1}{2}}v\\
&=\left(\rho^{-1}\partial_{\rho}\right)^{\ell+\frac{q-1}{2}}\left(\partial_{\rho}v\right)-\partial_{\rho}\left(\partial_\rho\,\rho^{-1}\right)^{\ell+\frac{q-1}{2}}v\\
&=\frac{2\ell+q-1}{\rho}\left(\partial_\rho\,\rho^{-1}\right)^{\ell+\frac{q-1}{2}}v=\frac{2\ell+q-1}{\rho}N
\end{split}\]
and
\[\begin{split}
\partial_{\rho}M+\partial_{\theta}N&=\partial_{\rho}\left(\rho^{-1}\partial_{\rho}\right)^{\ell+\frac{q-1}{2}}u+\left(\partial_\rho\,\rho^{-1}\right)^{\ell+\frac{q-1}{2}}\left(\partial_{\theta}v\right)\\
&=\left(\partial_\rho\,\rho^{-1}\right)^{\ell+\frac{q-1}{2}}\left(\partial_{\rho}u\right)+\left(\partial_\rho\,\rho^{-1}\right)^{\ell+\frac{q-1}{2}}\left(\partial_{\theta}v\right)\\
&=\left(\partial_\rho\,\rho^{-1}\right)^{\ell+\frac{q-1}{2}}(\partial_{\rho}u+\partial_{\theta}v)=0,
\end{split}\]
which completes the proof.
\end{proof}

\section{Fueter-Funk-Hecke theorem}\label{sect3}

In this section we shall state and proof the main result of the paper. It will be a combination of Theorem \ref{FueSomwave} and the so-called Funk-Hecke's formula (see e.g. \cite{Hoch}).

\begin{thm}[Funk-Hecke's formula]
Suppose that $\int_{-1}^1\vert F(t)\vert(1-t^2)^{(p-3)/2}dt<\infty$ and let $\underline\xi\in S^{p-1}$. If $Y_k(\underline x)$ is a spherical harmonic of order $k$ in $\mathbb R^p$, then 
\[\int_{S^{p-1}}F(\langle\underline\xi,\underline\eta\rangle)Y_k(\underline\eta)dS(\underline\eta)=Y_{k}(\underline\xi)\,\vert S^{p-2}\vert\,C_k(1)^{-1}\int_{-1}^1F(t)C_k(t)(1-t^2)^{(p-3)/2}dt,\]
where $C_k(t)$ is the Gegenbauer polynomial $C^{\lambda}_k(t)$ with $\lambda=(p-2)/2$ and $\vert S^{p-2}\vert$ denotes the surface area of the unit sphere in $\mathbb R^{p-1}$.
\end{thm}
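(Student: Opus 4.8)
The plan is to exploit the rotational invariance of the integral on the left together with the irreducibility of the spaces of spherical harmonics. Write $\mathcal H_k$ for the finite-dimensional space of spherical harmonics of order $k$ on $S^{p-1}$ and, for $F$ satisfying the stated integrability hypothesis, introduce the linear operator
\[(T_FY)(\underline\xi)=\int_{S^{p-1}}F(\langle\underline\xi,\underline\eta\rangle)Y(\underline\eta)\,dS(\underline\eta).\]
First I would check that $T_F$ commutes with rotations: for $R\in\mathrm{SO}(p)$, writing $Y^R(\underline\eta)=Y(R\underline\eta)$ and substituting $\underline\eta\mapsto R^{-1}\underline\eta$, the identities $\langle\underline\xi,R^{-1}\underline\eta\rangle=\langle R\underline\xi,\underline\eta\rangle$ and the $\mathrm{SO}(p)$-invariance of $dS$ give $T_F(Y^R)=(T_FY)^R$. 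Since $L^2(S^{p-1})=\bigoplus_{k\ge0}\mathcal H_k$ is a multiplicity-free decomposition into pairwise inequivalent irreducible $\mathrm{SO}(p)$-modules, any operator commuting with the $\mathrm{SO}(p)$-action preserves each $\mathcal H_k$ and, by Schur's lemma, acts on it as a scalar $\lambda_k$. Hence $T_FY_k=\lambda_kY_k$, which is precisely the asserted identity once $\lambda_k$ is identified.

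The scalar $\lambda_k$ is then read off by evaluating the identity on one convenient pair $(\underline\xi,Y_k)$. I would take $\underline\xi=\underline e$ the "north pole" and for $Y_k$ the zonal harmonic based at $\underline e$, which up to a nonzero constant $c$ equals $\underline\eta\mapsto C_k(\langle\underline e,\underline\eta\rangle)$ (this is exactly the property characterising the Gegenbauer index $\lambda=(p-2)/2$). Then $T_FY_k(\underline e)=\lambda_kY_k(\underline e)$ becomes
\[\lambda_k\,c\,C_k(1)=c\int_{S^{p-1}}F(\langle\underline e,\underline\eta\rangle)C_k(\langle\underline e,\underline\eta\rangle)\,dS(\underline\eta).\]
Slicing $S^{p-1}$ along the last coordinate $t=\langle\underline e,\underline\eta\rangle$, with $dS(\underline\eta)=(1-t^2)^{(p-3)/2}\,dt\,dS(\underline\omega)$ and $\underline\omega\in S^{p-2}$, the right-hand integral equals $|S^{p-2}|\int_{-1}^1F(t)C_k(t)(1-t^2)^{(p-3)/2}\,dt$. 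The constant $c$ cancels, and dividing by $C_k(1)$ gives the stated expression for $\lambda_k$.

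The step requiring the most care is reconciling the equivariance/Schur argument, which is cleanest when $T_F$ is evidently well defined on all of $\mathcal H_k$, with the weak regularity hypothesis on $F$. The clean route is to prove the formula first for $F$ continuous (or a Gegenbauer polynomial) and then pass to the limit, noting that for fixed $Y_k$ both sides depend continuously on $F$ in the weighted $L^1$-norm on $[-1,1]$: the left side because $Y_k$ is bounded on $S^{p-1}$ and the slicing identity bounds $\int_{S^{p-1}}|F(\langle\underline\xi,\underline\eta\rangle)|\,dS(\underline\eta)$ by $|S^{p-2}|\int_{-1}^1|F(t)|(1-t^2)^{(p-3)/2}\,dt$, the right side trivially. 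An alternative that avoids Schur's lemma altogether is to expand $F=\sum_nc_nC_n$ in the complete orthogonal system of Gegenbauer polynomials for the weight $(1-t^2)^{(p-3)/2}$ and integrate term by term: since $C_n(\langle\underline\xi,\cdot\rangle)\in\mathcal H_n$, orthogonality of spherical harmonics of different orders kills all terms with $n\ne k$, and the reproducing property of the zonal harmonic in $\mathcal H_k$ returns $Y_k(\underline\xi)$ times the relevant constant. Either way the theorem follows.
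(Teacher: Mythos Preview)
Your argument is essentially correct and follows one of the standard proofs of the Funk--Hecke formula: rotational equivariance of the integral operator, Schur's lemma on the irreducible $\mathrm{SO}(p)$-modules $\mathcal H_k$, and identification of the eigenvalue via the zonal harmonic and the slicing $dS(\underline\eta)=(1-t^2)^{(p-3)/2}\,dt\,dS(\underline\omega)$. The approximation step you outline to pass from continuous $F$ to weighted-$L^1$ $F$ is also the right way to handle the stated hypothesis.

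However, there is nothing to compare against: the paper does \emph{not} prove this theorem. Funk--Hecke's formula is merely quoted as a classical result (with a reference to Hochstadt) and then applied as a tool in the proof of Theorem~\ref{FrankFF}. So your proposal supplies a proof where the paper gives none; it is not a different route, but an addition.
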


We recall that the  Gegenbauer polynomial $C^{\lambda}_k(t)$ are orthogonal polynomials on the interval $[-1,1]$ with respect to the weight function $(1-t^2)^{\lambda-1/2}$ and satisfy the recurrence relation
\begin{align*}
C^{\lambda}_k(t)&=\frac{1}{k}\left(2(k+\lambda-1)t\,C^{\lambda}_{k-1}(t)-(k+2\lambda-2)C^{\lambda}_{k-2}(t)\right),\quad k\ge2,\\
C^{\lambda}_0(t)&=1,\qquad C^{\lambda}_1(t)=2\lambda t.
\end{align*}
For any $\underline x\in\mathbb R^p$ and $\underline y\in\mathbb R^q$ we put $\underline\omega=\underline x/r$, $\underline\nu=\underline y/\rho$, with $r=\vert\underline x\vert$ and $\rho=\vert\underline y\vert$.  Let  $P_k(\underline x)$, $P_\ell(\underline y)$ be homogeneous monogenic polynomials. For simplicity we shall assume that $P_k(\underline x)$, $P_\ell(\underline y)$ takes values in the even subalgebra $\mathbb R_{0,p}^+$, $\mathbb R_{0,q}^+$ respectively and hence
\[P_k(\underline x)P_\ell(\underline y)=P_\ell(\underline y)P_k(\underline x).\]

\begin{thm}[Fueter-Funk-Hecke theorem]\label{FrankFF}
Let $h=u+iv$ be a holomorphic function in  an open subset $\Xi$ of the upper half-plane that is invariant under the dilations $(r,\rho)\rightarrow(rt,\rho)$, $t\in[-1,1]$. In case $q$ is odd, then the function
\[\mathsf{Ft}_{p,q}\left[h(z),P_k(\underline x),P_\ell(\underline y)\right](\underline x,\underline y)=(\Delta_{\underline x}+\Delta_{\underline y})^{\ell+\frac{q-1}{2}}\Big(\big(A(r,\rho)-\underline\omega\,\underline\nu B(r,\rho)\big)P_k(\underline x)P_\ell(\underline y)\Big)\]
is monogenic in $\Omega=\left\{\left(\underline x,\underline y\right)\in\mathbb R^{m}:\;(r,\rho)\in\Xi\right\}$, where
\begin{equation}\label{parteA}
A(r,\rho)=C_k(1)^{-1}r^{-k}\int_{-1}^1u(rt,\rho)C_k(t)(1-t^2)^{(p-3)/2}dt,
\end{equation}
\begin{equation}\label{parteB}
B(r,\rho)=C_{k+1}(1)^{-1}r^{-k}\int_{-1}^1v(rt,\rho)C_{k+1}(t)(1-t^2)^{(p-3)/2}dt.
\end{equation}
\end{thm}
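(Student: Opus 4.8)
The plan is to exhibit the inner function $\bigl(A(r,\rho)-\underline\omega\,\underline\nu\,B(r,\rho)\bigr)P_k(\underline x)P_\ell(\underline y)$ as a spherical average, over the axis direction $\underline t\in S^{p-1}$, of the axial-type functions produced by Theorem~\ref{FueSomwave}, and then to interchange the operator with the integration over $S^{p-1}$. Put $n=\ell+\frac{q-1}{2}$ and, for $\underline t\in S^{p-1}$, set
\[g_{\underline t}(\underline x,\underline y)=\bigl(u(\langle\underline x,\underline t\rangle,\rho)-\underline t\,\underline\nu\,v(\langle\underline x,\underline t\rangle,\rho)\bigr)P_{\ell}(\underline y),\qquad \Phi_{\underline t}=\bigl(\langle\underline t,\partial_{\underline x}\rangle^2+\Delta_{\underline y}\bigr)^{n}g_{\underline t}.\]
Because $\langle\underline x,\underline t\rangle=r\langle\underline\omega,\underline t\rangle$ with $\langle\underline\omega,\underline t\rangle\in[-1,1]$, the invariance of $\Xi$ under $(r,\rho)\mapsto(rt,\rho)$ guarantees that every evaluation point $(\langle\underline x,\underline t\rangle,\rho)$ lies in $\Xi$, so $g_{\underline t}$ is defined on $\Omega$, and the same invariance makes the integrals~(\ref{parteA}) and~(\ref{parteB}) meaningful. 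By Theorem~\ref{FueSomwave} together with Remark~\ref{obser1}, each $\Phi_{\underline t}$ is monogenic on $\Omega$.

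The first computation I would carry out is the Funk--Hecke reduction. The restriction to $S^{p-1}$ of $P_k(\underline x)$ is a spherical harmonic of order $k$, and the restriction of $\underline x\,P_k(\underline x)$ is one of order $k+1$: indeed $\underline x\,P_k(\underline x)$ is homogeneous of degree $k+1$ and harmonic, since by the Leibniz rule, Euler's theorem and the monogenicity of $P_k$ one has $\partial_{\underline x}(\underline x\,P_k)=-(2k+p)P_k$, hence $\Delta_{\underline x}(\underline x\,P_k)=-\partial_{\underline x}^2(\underline x\,P_k)=(2k+p)\,\partial_{\underline x}P_k=0$. Applying Funk--Hecke's formula with $\underline\xi=\underline\omega$ to
\[\int_{S^{p-1}}u(\langle\underline x,\underline t\rangle,\rho)\,P_k(\underline t)\,dS(\underline t)\qquad\text{and}\qquad\int_{S^{p-1}}v(\langle\underline x,\underline t\rangle,\rho)\,\underline t\,P_k(\underline t)\,dS(\underline t)\]
(the integrability hypothesis being automatic from the continuity of $u$ and $v$) produces Gegenbauer integrals against $C_k$ and $C_{k+1}$ respectively, accompanied by the factors $P_k(\underline\omega)=r^{-k}P_k(\underline x)$ and $\underline\omega\,P_k(\underline\omega)=r^{-k-1}\underline x\,P_k(\underline x)$; comparing with the definitions~(\ref{parteA}) and~(\ref{parteB}) this should give
\[\int_{S^{p-1}}g_{\underline t}(\underline x,\underline y)\,P_k(\underline t)\,dS(\underline t)=\vert S^{p-2}\vert\,\bigl(A(r,\rho)-\underline\omega\,\underline\nu\,B(r,\rho)\bigr)P_k(\underline x)P_{\ell}(\underline y).\]
Here one must commute the Clifford factors $\underline t$, $\underline\nu$, $P_k$ and $P_{\ell}$ past one another, which is exactly where the assumption that $P_k(\underline x)$ and $P_{\ell}(\underline y)$ take values in the even subalgebras $\mathbb R_{0,p}^+$ and $\mathbb R_{0,q}^+$ is used.

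Next I would match the two powers of Laplace-type operators. Since $g_{\underline t}$ depends on $\underline x$ only through $\langle\underline x,\underline t\rangle$, and applying $\langle\underline t,\partial_{\underline x}\rangle^2+\Delta_{\underline y}$ preserves this feature, it suffices to note that on any function of $(\langle\underline x,\underline t\rangle,\underline y)$ one has $\Delta_{\underline x}=\langle\underline t,\partial_{\underline x}\rangle^2$; this follows by completing $\underline t$ to an orthonormal basis of $\mathbb R^p$ and using~(\ref{iguathet}). An induction on $j$ then gives $(\Delta_{\underline x}+\Delta_{\underline y})^{j}g_{\underline t}=(\langle\underline t,\partial_{\underline x}\rangle^2+\Delta_{\underline y})^{j}g_{\underline t}$, in particular $(\Delta_{\underline x}+\Delta_{\underline y})^{n}g_{\underline t}=\Phi_{\underline t}$. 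Feeding this and the displayed Funk--Hecke identity into $\mathsf{Ft}_{p,q}$, and differentiating under the integral sign (legitimate since $u$ and $v$ are smooth and one may work on compact subsets of $\Omega$), gives
\[\mathsf{Ft}_{p,q}\bigl[h,P_k,P_{\ell}\bigr](\underline x,\underline y)=\vert S^{p-2}\vert^{-1}\int_{S^{p-1}}\Phi_{\underline t}(\underline x,\underline y)\,P_k(\underline t)\,dS(\underline t),\]
whence $\partial_{\underline X}\mathsf{Ft}_{p,q}\bigl[h,P_k,P_{\ell}\bigr]=\vert S^{p-2}\vert^{-1}\int_{S^{p-1}}\bigl(\partial_{\underline X}\Phi_{\underline t}\bigr)P_k(\underline t)\,dS(\underline t)=0$, so the function is monogenic on $\Omega$.

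I expect the main difficulty to lie not in any isolated estimate but in carrying out the Funk--Hecke bookkeeping and the operator reduction in tandem: one must arrange, simultaneously, that the Gegenbauer indices land as $k$ on $A$ and $k+1$ on $B$ exactly as in~(\ref{parteA}) and~(\ref{parteB}), that the Clifford factors assemble in the order $\underline\omega\,\underline\nu\,B\,P_k(\underline x)P_{\ell}(\underline y)$ despite non-commutativity, and that the reduction $\Delta_{\underline x}=\langle\underline t,\partial_{\underline x}\rangle^2$ be applied only to functions of $\langle\underline x,\underline t\rangle$; checking that the dilation-invariance of $\Xi$ really keeps every point $(\langle\underline x,\underline t\rangle,\rho)$ inside $\Xi$ is a small but indispensable ingredient.
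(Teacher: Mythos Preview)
Your proposal is correct and follows essentially the same route as the paper's own proof: represent $(A-\underline\omega\,\underline\nu\,B)P_kP_\ell$ as the $S^{p-1}$-average $\int g_{\underline t}P_k(\underline t)\,dS(\underline t)$ via Funk--Hecke applied to the harmonics $P_k$ and $\underline xP_k$, reduce $\Delta_{\underline x}+\Delta_{\underline y}$ to $\langle\underline t,\partial_{\underline x}\rangle^2+\Delta_{\underline y}$ on the integrand using~(\ref{iguathet}), and then invoke Theorem~\ref{FueSomwave} together with Remark~\ref{obser1}. Your write-up is in fact slightly more detailed than the paper's (you spell out why $\underline xP_k$ is harmonic, the Clifford commutations, and the role of the dilation-invariance of $\Xi$), but the strategy is identical.
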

\begin{proof}
Let $\left\{\underline t,\underline s_1,\dots,\underline s_{p-1}\right\}$ be an orthonormal basis of $\,\mathbb R^p$. Then it is clear that 
\[\partial_{\underline x}=\underline t\langle\underline t,\partial_{\underline x}\rangle+\sum_{j=1}^{p-1}\underline s_j\langle\underline s_j,\partial_{\underline x}\rangle,\]
\[\Delta_{\underline x}=\langle\underline t,\partial_{\underline x}\rangle^2+\sum_{j=1}^{p-1}\langle\underline s_j,\partial_{\underline x}\rangle^2.\]
Consider the function
\[I(\underline x,\underline y)=\left(\int_{S^{p-1}}\big(u(\theta,\rho)-\underline t\,\underline\nu\,v(\theta,\rho)\big)P_k(\underline t)dS(\underline t)\right)P_\ell(\underline y).\]
It thus follows from (\ref{iguathet}) that
\begin{multline*}
(\Delta_{\underline x}+\Delta_{\underline y})^{\ell+\frac{q-1}{2}}I(\underline x,\underline y)\\
=\int_{S^{p-1}}\left(\langle\underline t,\partial_{\underline x}\rangle^2+\Delta_{\underline y}\right)^{\ell+\frac{q-1}{2}}\Big(\big(u(\theta,\rho)-\underline t\,\underline\nu\,v(\theta,\rho)\big)P_\ell(\underline y)\Big)P_k(\underline t)dS(\underline t).
\end{multline*}
Therefore by Theorem \ref{FueSomwave} and Remark \ref{obser1} we can assert that $(\Delta_{\underline x}+\Delta_{\underline y})^{\ell+\frac{q-1}{2}}I(\underline x,\underline y)$ is monogenic, i.e.
\[(\partial_{\underline x}+\partial_{\underline y})(\Delta_{\underline x}+\Delta_{\underline y})^{\ell+\frac{q-1}{2}}I=0.\] 
We now write $I$ as 
\[I(\underline x,\underline y)=\left(I_1(\underline x,\underline y)+\underline\nu I_2(\underline x,\underline y)\right)P_\ell(\underline y)\]
with
\[I_1(\underline x,\underline y)=\int_{S^{p-1}}u(\theta,\rho)P_k(\underline t)dS(\underline t),\quad I_2(\underline x,\underline y)=\int_{S^{p-1}}\underline t\,v(\theta,\rho)P_k(\underline t)dS(\underline t).\]
As $P_k(\underline x)$, $\underline x P_k(\underline x)$ are both harmonic we can make use of Funk-Hecke's formula to compute $I_1$ and $I_2$. Indeed, writing $\underline x$ as $r\underline\omega$ we obtain
\[I_1(\underline x,\underline y)=P_k(\underline\omega)\vert S^{p-2}\vert\,C_k(1)^{-1}\int_{-1}^1u(rt,\rho)C_k(t)(1-t^2)^{(p-3)/2}dt,\]
\[I_2(\underline x,\underline y)=\underline\omega P_k(\underline\omega)\vert S^{p-2}\vert\,C_{k+1}(1)^{-1}\int_{-1}^1v(rt,\rho)C_{k+1}(t)(1-t^2)^{(p-3)/2}dt,\]
and thus completing the proof.
\end{proof}

\begin{rem}
Note that Theorem \ref{FrankFF} produces biaxial monogenic functions of the form
\[\bigl(M(r,\rho)+\underline\omega\,\underline\nu\,N(r,\rho)\bigr)P_k(\underline x)P_{\ell}(\underline y),\] 
where $M$, $N$ are $\mathbb R$-valued functions satisfying the Vekua-type system
\begin{equation*}
\begin{split}
\partial_{r}M+\partial_{\rho}N&=-\frac{2\ell+q-1}{\rho}N\\
\partial_{\rho}M-\partial_{r}N&=\frac{2k+p-1}{r}N.
\end{split}
\end{equation*}
\end{rem}
\noindent
Here we present three examples:
\begin{align*}
&\mathsf{Ft}_{3,3}\left[iz,1,1\right](\underline x,\underline y)=\frac{1}{\rho}-\frac{\underline x\,\underline y}{3\rho^3},\\
&\mathsf{Ft}_{4,3}\left[iz^4,P_1(\underline x),1\right](\underline x,\underline y)=\left(\frac{r^2-6\rho^2}{\rho}-\frac{\underline x\,\underline y}{8\rho^3}\big(r^2+8\rho^2\big)\right)P_1(\underline x),\\
&\mathsf{Ft}_{3,3}\left[1/(1+z^2),1,1\right](\underline x,\underline y)=\frac{4}{\big(r^2+(\rho+1)^2\big)\big(r^2+(\rho-1)^2\big)}\\
&-\underline\omega\,\underline\nu\left(\frac{2(r^2-\rho^2+1)}{r\rho\big(r^2+(\rho+1)^2\big)\big(r^2+(\rho-1)^2\big)}+\frac{\arctan\left(\displaystyle{\frac{r}{\rho+1}}\right)+\arctan\left(\displaystyle{\frac{r}{\rho-1}}\right)}{r^2\rho^2}\right).
\end{align*}
We end the paper with a brief discussion about the action of the Fueter-Funk-Hecke mapping $\mathsf{Ft}_{p,q}$ on the positive powers of $z$. We shall first consider the case $h(z)=z^{2n}$. For this case we clearly have that
\begin{align*}
u(rt,\rho)&=\sum_{\mu=0}^{n}(-1)^{n-\mu}\binom{2n}{2\mu}(rt)^{2\mu}\rho^{2n-2\mu},\\
v(rt,\rho)&=\sum_{\mu=0}^{n-1}(-1)^{n-\mu-1}\binom{2n}{2\mu+1}(rt)^{2\mu+1}\rho^{2n-2\mu-1},
\end{align*}
which implies that  $u(rt,\rho)C_k(t)$, $v(rt,\rho)C_{k+1}(t)$ are both even functions in the variable $t$ when $k$ is even and odd functions when $k$ is odd. 

Hence, we can assume that $k$ is even since for $k$ odd the functions $A(r,\rho)$, $B(r,\rho)$ defined respectively in (\ref{parteA}), (\ref{parteB}) are equal to zero. Taking into account the orthogonality of the polynomials $C_k(t)$, $C_{k+1}(t)$ we have 
\[\int_{-1}^1t^{2\mu}C_k(t)(1-t^2)^{(p-3)/2}dt=\int_{-1}^1t^{2\mu+1}C_{k+1}(t)(1-t^2)^{(p-3)/2}dt=0,\quad 0\le\mu<k/2.\]
It thus follows that
\[A(r,\rho)=\sum_{\mu=k/2}^{n}a_{\mu}r^{2\mu-k}\rho^{2n-2\mu},\qquad B(r,\rho)=\sum_{\mu=k/2}^{n-1}b_{\mu}r^{2\mu-k+1}\rho^{2n-2\mu-1}\]
for certain real constants $a_{\mu}$, $b_{\mu}$ and as a result $A(r,\rho)-\underline\omega\,\underline\nu B(r,\rho)$ will be a homogeneous polynomial of degree $2n-k$ with real coefficients in the variables $\underline x$, $\underline y$. A similar analysis can be made for the case of odd powers of $z$ and we arrive at the following conclusion.
\[\mathsf{Ft}_{p,q}\left[z^n,P_k(\underline x),P_{\ell}(\underline y)\right](\underline x,\underline y)=\left\{\begin{array}{ll}H(\underline x,\underline y)P_k(\underline x)P_{\ell}(\underline y),&k-n\;\;\text{even}\\0,&k-n\;\;\text{odd},\end{array}\right.\]
 where $H(\underline x,\underline y)$ denotes a homogeneous polynomial of degree $n-(k+2\ell+q-1)$ with real coefficients in the variables $\underline x$, $\underline y$ if $n\ge k+2\ell+q-1$ and zero otherwise. 
 
As an illustration, we provide the following examples:  

\begin{align*}
\mathsf{Ft}_{3,3}\left[z^4,1,1\right](\underline x,\underline y)&=\underline x^2+\frac{2}{3}\underline x\,\underline y-\underline y^2,\\
\mathsf{Ft}_{3,3}\left[z^7,P_1(\underline x),1\right](\underline x,\underline y)&=\left(3\underline x^4+4\underline x^3\underline y-14\underline x^2\underline y^2-\frac{28}{5}\underline x\,\underline y^3+7\underline y^4\right)P_1(\underline x).
\end{align*}

\subsection*{Acknowledgments}

D. Pe\~na Pe\~na acknowledges the support of a Postdoctoral Fellowship funded by the \lq\lq Special Research Fund" (BOF) of Ghent University.

\end{document}